\newtheorem{Definition}{Definition}%[section]
\newtheorem{Lemma}[Definition]{Lemma}
\newtheorem{Theorem}[Definition]{Theorem}
\newtheorem{Example}[Definition]{Example}
\newtheorem{Corollary}[Definition]{Corollary}
\title{Residuation in non-associative MV-algebras\thanks{This is a non-refereed version of a paper which will be published by De Gruyter in Mathematica Slovaca.}}
\author{Ivan~Chajda and Helmut~L\"anger}
\date{}
\begin{document}
\footnotetext[1]{Support of the research by the bilateral project entitled "New perspectives on residuated posets", supported by the Austrian Science Fund (FWF), project I~1923-N25, and the Czech Science Foundation (GA\v CR), project 15-34697L, and by \"OAD, project CZ~04/2017, as well as support of the first author by IGA, project P\v rF 2017012, is gratefully acknowledged.}
\maketitle
\begin{abstract}
It is well known that every MV-algebra can be converted into a residuated lattice satisfying divisibility and the double negation law. In our previous papers we introduced the concept of an NMV-algebra which is a non-associative modification of an MV-algebra. The natural question arises if an NMV-algebra can be converted into a residuated structure, too. Contrary to MV-algebras, NMV-algebras are not based on lattices but only on directed posets and the binary operation need not be associative and hence we cannot expect to obtain a residuated lattice but only an essentially weaker structure called a conditionally residuated poset. Considering several additional natural conditions we show that every NMV-algebra can be converted in such a structure. Also conversely, every such structure can be organized into an NMV-algebra. Further, we study a bit more stronger version of an algebra where the binary operation is even monotonous. We show that such an algebra can be organized into a residuated poset and, conversely, every residuated poset can be converted in this structure.
\end{abstract}

{\bf AMS Subject Classification:} 06D35, 03G10, 06A11

{\bf Keywords:} MV-algebra, non-associative MV-algebra, conditional adjointness, residuated poset, directoid

The concept of an MV-algebra was introduced by C.~C.~Chang (\cite C) as an algebraic semantic for the \L ukasiewicz many-valued logic. His definition is rather complicated but it was simplified by Cignoli, D'Ottaviano and Mundici (\cite{CDM}). In some applications, e.g.\ in expert systems, it seems that associativity of the binary operation $\oplus$, which models disjunction, cannot be accepted. Hence, a non-associative version of an MV-algebra, the so-called NMV-algebra, was introduced by the first author and J.~K\"uhr in \cite{CK} and later studied by the authors in \cite{CL}.

It is well-known that MV-algebras can be considered as so-called residuated lattices. In fact, an (integral) residuated lattice is an MV-algebra if and only if it satisfies the so-called divisibility and the double negation law, see e.g.\ \cite B for details. This motivated us to investigate under what conditions a similar characterization is possible also for NMV-algebras.

It is worth noticing that the adjointness property in residuated structures strongly depends on the associativity of the binary multiplication. Hence, we cannot expect that residuation will hold in full sense also in the non-associative case. On the other hand, using certain reasonable restriction, adjointness can be modified for NMV-algebras provided they satisfy some more condition.

At first, we repeat the definition of an NMV-algebra from \cite{CK}.

\begin{Definition}\label{def1}
A {\em non-associative MV-algebra} {\rm(}{\em NMV-algebra}{\rm)} is an algebra ${\bf A}=(A,\oplus,$ $\neg,0)$ of type $(2,1,0)$ satisfying the identities
\begin{eqnarray*}
                                                             x\oplus y & \approx & y\oplus x, \\
                                                              x\oplus0 & \approx & x, \\
                                                          \neg(\neg x) & \approx & x, \\
                                                              x\oplus1 & \approx & 1, \\
                                          \neg(\neg x\oplus y)\oplus y & \approx & \neg(\neg y\oplus x)\oplus x,\label{equ3} \\
\neg x\oplus(\neg(\neg(\neg(\neg x\oplus y)\oplus y)\oplus z)\oplus z) & \approx & 1, \\
                                               \neg x\oplus(x\oplus y) & \approx & 1.
\end{eqnarray*}
Here and in the following $1$ is an abbreviation for $\neg0$. The fifth identity is usually called the {\em\L ukasiewicz axiom}.
\end{Definition}

In any NMV-algebra, we can define the following term operations:
\begin{eqnarray*}
x\rightarrow y & := & \neg x\oplus y, \\
     x\sqcup y & := & (x\rightarrow y)\rightarrow y, \\
    x\otimes y & := & \neg(\neg x\oplus\neg y), \\
     x\sqcap y & := & \neg(\neg x\sqcup\neg y), \\
		       x^y & := & x\rightarrow y
\end{eqnarray*}
and the following binary relation
\[
x\leq y\text{ if and only if }x\rightarrow y=1.
\]
This relation will be called the {\em induced order} of the NMV-algebra. The following identities for NMV-algebras are immediate or follow from \cite{CK} or \cite{CL}:
\begin{eqnarray*}
((x\rightarrow y)\rightarrow y)\rightarrow y & \approx & x\rightarrow y, \\
                              x\rightarrow y & \approx & (x\sqcup y)^y, \\
                              x\rightarrow y & \approx & \neg y\rightarrow\neg x, \\
                                  x\otimes y & \approx & \neg(x\sqcup\neg y)^{\neg y}, \\
               (x\rightarrow y)\rightarrow y & \approx & (y\rightarrow x)\rightarrow x, \\
                              1\rightarrow x & \approx & x, \\
                               x\rightarrow0 & \approx & \neg x, \\
                    x\otimes(x\rightarrow y) & \approx & x\sqcap y, \\
                x\rightarrow(y\rightarrow x) & \approx & 1, \\
								    (x\sqcup y)\rightarrow y & \approx & x\rightarrow y.
\end{eqnarray*}
From the third identity we conclude
\[
x\leq y\text{ if and only if }\neg y\leq\neg x.
\]
The fifth identity is in fact the \L ukasiewicz axiom.

We recall several terms and concepts which will be used throughout the paper.

Let $(A,\leq)$ be a poset. For $x,y\in A$ we denote by $U(x,y)$ the set $\{z\in A\mid x,y\leq z\}$  . A {\em poset} $(A,\leq)$ is called {\em directed} if $U(x,y)\neq\emptyset$ for all $x,y\in A$. Of course, every poset $(A,\leq)$ with greatest element $1$ is directed since $1\in U(x,y)$ for all $x,y\in A$.

A groupoid $(A,\sqcup)$ is called a {\em directoid} (or {\em commutative directoid} in \cite{JQ}) if it satisfies the identities $x\sqcup x\approx x$, $x\sqcup y\approx y\sqcup x$ and $x\sqcup((x\sqcup y)\sqcup z)\approx(x\sqcup y)\sqcup z$ (see e.g.\ \cite{CL11} for details). It was shown by Je\v zek and Quackenbush (\cite{JQ}) that every directed poset $(A,\leq)$ can be converted into a directoid by defining $x\sqcup y:=\max(x,y)$ if $x$ and $y$ are comparable and $x\sqcup y=y\sqcup x\in U(x,y)$ otherwise ($x,y\in A$); the choice of $x\sqcup y\in U(x,y)$ is arbitrary but fixed. Then $x\leq y$ is equivalent to $x\sqcup y=y$. Also conversely, if $(A,\sqcup)$ is a directoid and we define $x\leq y$ if and only if $x\sqcup y=y$ then $(A,\leq)$ is a directed poset.

Let $(A,\leq,1)$ be a poset with greatest element $1$. For $a\in A$, the interval $[a,1]$ will be called a {\em section}. We say that $(A,\leq,1)$ is a {\em poset with switching section involutions} if for every $a\in A$ there exists a mapping $x\mapsto x^a$ of $[a,1]$ into itself such that $a^a=1$, $1^a=a$ and $(x^a)^a=x$ for all $x\in A$. We say that this poset is a {\em poset with section antitone involutions} (shortly, SAI) if, moreover, $x,y\in[a,1]$ and $x\leq y$ together imply $y^a\leq x^a$. Hence every section antitone involution is a switching one.

The following is immediate or follows from \cite{CK} or \cite{CL}:

\begin{Lemma}
Let $(A,\oplus,\neg,0)$ be an {\rm NMV}-algebra, $a,b\in A$ and $\leq$ its induced order. Then the following hold:
\begin{eqnarray*}
& & (A,\leq,1)\text{ is a poset with greatest element }1, \\
& & (A,\sqcup)\text{ is a commutative directoid}, \\
& & ^a|[a,1]\text{ is a switching involution on }([a,1],\leq), \\
& & a,b\leq a\oplus b, \\
& & a,b\leq a\sqcup b, \\
& & a\sqcap b\leq a,b, \\
& & \neg a\leq a\rightarrow b.
\end{eqnarray*}
\end{Lemma}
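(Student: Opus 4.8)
The plan is to verify each of the seven assertions in the Lemma by unwinding the definitions given in the excerpt and invoking the listed NMV-algebra identities. Most items are direct consequences of a single identity, so I would organize the proof as a sequence of short independent verifications rather than one chained argument.

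First I would establish that $(A,\leq,1)$ is a poset with greatest element $1$. Reflexivity $x\leq x$ amounts to $x\rightarrow x=\neg x\oplus x=1$, which follows from $\neg x\oplus(x\oplus y)\approx1$ by taking $y:=0$ together with $x\oplus0\approx x$. Antisymmetry and transitivity I would derive from the induced-order definition and the L\"ukasiewicz-type identities; in particular the identity $x\rightarrow(y\rightarrow x)\approx1$ gives $x\leq y\rightarrow x$ and should feed into transitivity, while antisymmetry uses that $x\rightarrow y=y\rightarrow x=1$ forces $x\sqcup y=(x\rightarrow y)\rightarrow y=1\rightarrow y=y$ and symmetrically $=x$. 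That $1$ is the greatest element is $x\rightarrow1\approx1$, i.e.\ $\neg x\oplus1\approx1$, immediate from $x\oplus1\approx1$.

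Next I would treat the directoid claim: having a poset with greatest element, $(A,\leq)$ is directed, so by the Je\v zek--Quackenbush result it suffices to check that the term $x\sqcup y:=(x\rightarrow y)\rightarrow y$ agrees with the directoid operation, i.e.\ that $x\leq y$ is equivalent to $x\sqcup y=y$ and that $\sqcup$ is idempotent, commutative and satisfies the weak associativity identity. Commutativity is exactly the listed identity $(x\rightarrow y)\rightarrow y\approx(y\rightarrow x)\rightarrow x$, and the equivalence $x\leq y\Leftrightarrow x\sqcup y=y$ follows since $x\rightarrow y=1$ gives $x\sqcup y=1\rightarrow y=y$. For the switching involution on $[a,1]$ I would use the operation $x\mapsto x^a=x\rightarrow a$, checking $a^a=a\rightarrow a=1$, $1^a=1\rightarrow a=a$ (from $1\rightarrow x\approx x$), and $(x^a)^a=x$ from the listed triple-negation identity $((x\rightarrow y)\rightarrow y)\rightarrow y\approx x\rightarrow y$ applied appropriately; I should also confirm the map sends $[a,1]$ into itself.

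Finally, the four order inequalities at the bottom reduce to showing certain residuals equal $1$. The inequality $a,b\leq a\oplus b$ follows from $\neg a\oplus(a\oplus b)\approx1$ (this is literally $a\rightarrow(a\oplus b)=1$) together with commutativity of $\oplus$; $a,b\leq a\sqcup b$ follows from $a\leq a\sqcup b$, which I get from $a\rightarrow(a\sqcup b)=1$ via the identity $(x\sqcup y)\rightarrow y\approx x\rightarrow y$ and commutativity of $\sqcup$; the dual $a\sqcap b\leq a,b$ then follows by applying $\neg$ and the order-reversal $x\leq y\Leftrightarrow\neg y\leq\neg x$ to the $\sqcup$ inequalities, using $a\sqcap b=\neg(\neg a\sqcup\neg b)$; and $\neg a\leq a\rightarrow b$ is $\neg a\rightarrow(\neg a\oplus b)=1$, again an instance of $\neg x\oplus(x\oplus y)\approx1$ with $x:=\neg a$. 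The main obstacle I anticipate is verifying \emph{antisymmetry and transitivity} of $\leq$ cleanly, since the induced order is defined via a non-associative operation and the usual residuated-lattice proofs are unavailable; here I expect to lean carefully on the L\"ukasiewicz axiom and the commutativity identity $(x\rightarrow y)\rightarrow y\approx(y\rightarrow x)\rightarrow x$ rather than on any associativity of $\oplus$.
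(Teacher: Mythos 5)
Your overall plan is sound and most of the individual verifications are correct: reflexivity from the seventh axiom with $y:=0$, the greatest element from $x\oplus1\approx1$, antisymmetry via $x\sqcup y=y$ and $y\sqcup x=x$ combined with the \L{}ukasiewicz axiom, the switching involution from $a^a=1$, $1^a=a$ and $(x^a)^a=(x\rightarrow a)\rightarrow a=x\sqcup a=x$ on $[a,1]$ (with $a\le x\rightarrow a$ from $x\rightarrow(y\rightarrow x)\approx1$ guaranteeing the section is preserved), and the inequalities $a,b\le a\oplus b$, $a\sqcap b\le a,b$ and $\neg a\le a\rightarrow b$ from $\neg x\oplus(x\oplus y)\approx1$ and the antitonicity of $\neg$. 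For the record, the paper offers no proof at all here; it simply declares the Lemma ``immediate or follows from'' the earlier papers of Chajda--K\"uhr and Chajda--L\"anger, so there is no in-paper argument to compare against.

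The one genuine gap is exactly the point you flag as ``the main obstacle'': transitivity of $\leq$, and with it the directoid identity $x\sqcup((x\sqcup y)\sqcup z)\approx(x\sqcup y)\sqcup z$. Your stated plan --- to lean on the \L{}ukasiewicz axiom and the commutativity identity $(x\rightarrow y)\rightarrow y\approx(y\rightarrow x)\rightarrow x$ --- cannot close this; those identities say nothing about three-element configurations. The missing ingredient is the sixth defining axiom, $\neg x\oplus(\neg(\neg(\neg(\neg x\oplus y)\oplus y)\oplus z)\oplus z)\approx1$, which in the derived notation reads precisely $x\leq(x\sqcup y)\sqcup z$. From it, $x\leq y$ and $y\leq z$ give $x\sqcup y=y$ and $y\sqcup z=z$, hence $x\leq(x\sqcup y)\sqcup z=y\sqcup z=z$; the same inequality, combined with $u\leq v\Rightarrow u\sqcup v=v$, also yields the third directoid identity. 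You never invoke this axiom anywhere in the proposal, and without it neither transitivity nor that identity is obtainable. A smaller slip: your derivation of $a\leq a\sqcup b$ from $(x\sqcup y)\rightarrow y\approx x\rightarrow y$ does not go through as stated; instead use $x\rightarrow(y\rightarrow x)\approx1$ with $y:=a\rightarrow b$ to get $b\leq(a\rightarrow b)\rightarrow b=a\sqcup b$, and then the \L{}ukasiewicz axiom to get $a\leq(b\rightarrow a)\rightarrow a=a\sqcup b$.
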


The following example serves as an inspiration for our investigations concerning NMV-algebras.

\begin{Example}\label{ex1}
Let $A=\{0,a,b,c,e,1\}$ and define the operations $\oplus$ and $\neg$ as follows:
\[
\begin{array}{c|cccccc}
\oplus & 0 & a & b & c & d & 1 \\
\hline
  0    & 0 & a & b & c & d & 1 \\
	a    & a & d & c & c & 1 & 1 \\
	b    & b & c & d & 1 & d & 1 \\
	c    & c & c & 1 & 1 & 1 & 1 \\
	e    & d & 1 & d & 1 & 1 & 1 \\
	1    & 1 & 1 & 1 & 1 & 1 & 1
\end{array}
\quad
\begin{array}{c|cccccc}
   x   & 0 & a & b & c & d & 1 \\
\hline
\neg x & 1 & d & c & b & a & 0
\end{array}
\]
Then $\mathbf A=(A,\oplus,\neg,0)$ is an {\rm NMV}-algebra. The operation tables for $\otimes$, $\rightarrow$ and $\sqcup$ look as follows:
\[
\begin{array}{c|cccccc}
\otimes & 0 & a & b & c & d & 1 \\
\hline
  0     & 0 & 0 & 0 & 0 & 0 & 0 \\
	a     & 0 & 0 & 0 & a & 0 & a \\
	b     & 0 & 0 & 0 & 0 & b & b \\
	c     & 0 & a & 0 & a & b & c \\
	d     & 0 & 0 & b & b & a & d \\
	1     & 0 & a & b & c & d & 1
\end{array}
\quad
\begin{array}{c|cccccc}
\rightarrow & 0 & a & b & c & d & 1 \\
\hline
     0      & 1 & 1 & 1 & 1 & 1 & 1 \\
		 a      & d & 1 & d & 1 & 1 & 1 \\
		 b      & c & c & 1 & 1 & 1 & 1 \\
		 c      & b & c & d & 1 & d & 1 \\
		 d      & a & d & c & c & 1 & 1	\\
		 1      & 0 & a & b & c & d & 1
\end{array}
\quad
\begin{array}{c|cccccc}
\sqcup & 0 & a & b & c & e & 1 \\
\hline
  0    & 0 & a & b & c & e & 1 \\
	a    & a & a & c & c & e & 1 \\
  b    & b & c & b & c & e & 1 \\
	c    & c & c & c & c & 1 & 1 \\
	e    & e & e & e & 1 & e & 1 \\
	1    & 1 & 1 & 1 & 1 & 1 & 1
\end{array}
\]
Hence the corresponding poset has the Hasse diagram
\vspace*{-5mm}
\begin{center}
\setlength{\unitlength}{8mm}
\begin{picture}(4,8)
\put(2,1){\circle*{.2}}
\put(1,3){\circle*{.2}}
\put(3,3){\circle*{.2}}
\put(1,5){\circle*{.2}}
\put(3,5){\circle*{.2}}
\put(2,7){\circle*{.2}}
\put(1,3){\line(1,-2)1}
\put(1,3){\line(1,1)2}
\put(1,3){\line(0,1)2}
\put(3,3){\line(0,1)2}
\put(3,3){\line(-1,1)2}
\put(3,3){\line(-1,-2)1}
\put(2,7){\line(-1,-2)1}
\put(2,7){\line(1,-2)1}
\put(1.85,.3){$0$}
\put(.4,2.85){$a$}
\put(3.35,2.85){$b$}
\put(.4,4.9){$c$}
\put(3.35,4.9){$e$}
\put(1.85,7.3){$1$}
\end{picture}
\end{center}
One can see that this poset is not a lattice, thus $\mathbf A$ cannot be an {\rm MV}-algebra nor a commutative basic algebra. The switching involutions on sections are given by the following table:
\[
\begin{array}{c|cccccc}
 x  & 0 & a & b & c & d & 1 \\
\hline
x^0 & 1 & d & c & b & a & 0 \\
x^a &   & 1 &   & c & d & a \\
x^b &   &   & 1 & d & c & b \\
x^c &   &   &   & 1 &   & c \\
x^d &   &   &   &   & 1 & d \\
x^1 &   &   &   &   &   & 1
\end{array}
\]
All these involutions are antitone which is not the case in general.
\end{Example}

If $\mathbf A=(A,\oplus,\neg,0)$ is an NMV-algebra whose switching section involutions are antitone, then we will call $\mathbf A$ an {\em{\rm NMV}-algebra with {\rm SAI}}.

The next concept which will be used is as follows. Let $\mathbf P=(P,\leq,\otimes,\rightarrow,0,1)$ be a bounded poset with two additional binary operations such that
\begin{enumerate}
\item[(i)] $(P,\otimes,1)$ is a commutative groupoid with neutral element $1$,
\item[(ii)] For all $x,y,z\in P$, $x\otimes y\leq z$ is equivalent to $x\leq y\rightarrow z$.
\end{enumerate}
Then $\mathbf P$ is the called a {\em residuated poset}. Condition (ii) is called {\em adjointness} (see e.g.\ \cite B). In any residuated poset we put $\neg x:=x\rightarrow0$. The residuated poset $\mathbf P$ is called {\em integral} if $1$ is the greatest element of $(P,\leq)$.

We are going to investigate under which conditions a certain modification of adjointness holds in NMV-algebras with SAI.

\begin{Lemma}\label{lem2}
Let $(A,\oplus,\neg,0)$ be an {\rm NMV}-algebra with {\rm SAI}, let $a,b,c\in A$ and assume $c\leq b$. Then $a\otimes b\leq c$ implies $a\leq b\rightarrow c$.
\end{Lemma}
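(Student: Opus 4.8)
The plan is to reduce the desired inequality $a\le b\rightarrow c$ to a single application of the antitone involution on a suitably chosen section, which is exactly where the SAI hypothesis enters. The guiding idea is that both $\rightarrow$ and $\sqcup$ are expressible through the section involutions $x\mapsto x^p=x\rightarrow p$, and that the assumption $c\le b$ is precisely what forces the relevant elements into one common section $[p,1]$.

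First I would translate everything into the section based at $p:=\neg b$. Writing also $q:=\neg c$, the hypothesis $c\le b$ becomes $p\le q$ (using $x\le y\iff\neg y\le\neg x$), so $q\in[p,1]$. Next I would rewrite the product: since $a\otimes b=\neg(\neg a\oplus\neg b)=\neg(a\rightarrow\neg b)=\neg(a\rightarrow p)$, the hypothesis $a\otimes b\le c$ is equivalent (again by $x\le y\iff\neg y\le\neg x$) to $q=\neg c\le a\rightarrow p$. Finally, using $x\rightarrow y\approx\neg y\rightarrow\neg x$, the target $a\le b\rightarrow c$ is the same as $a\le q\rightarrow p$. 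Thus the whole statement reduces to: from $p\le q$ and $q\le a\rightarrow p$, deduce $a\le q\rightarrow p$.

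The key step is to observe that $a\rightarrow p=\neg a\oplus p\ge p$ (because $y\le x\oplus y$), so that $q$ and $a\rightarrow p$ both lie in the section $[p,1]$ and satisfy $q\le a\rightarrow p$. Applying the antitone section involution $x\mapsto x^p=x\rightarrow p$ — the only point at which SAI is invoked — gives $(a\rightarrow p)^p\le q^p$, that is $(a\rightarrow p)\rightarrow p\le q\rightarrow p$. By the definition of $\sqcup$ the left-hand side equals $a\sqcup p$, and since $a\le a\sqcup p$ we conclude $a\le a\sqcup p=(a\rightarrow p)\rightarrow p\le q\rightarrow p$, which is the assertion.

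I expect the main obstacle to be organizational rather than computational: recognizing that one should pass to the section $[\neg b,1]$ and then checking that both $\neg c$ and $a\rightarrow\neg b$ genuinely land in it — this is exactly what the hypothesis $c\le b$ provides, via $\neg b\le\neg c$. Once the two elements are confirmed comparable inside a single section, antitonicity of the involution does all the work, and what remains are only the definitional rewrites $a\otimes b=\neg(a\rightarrow\neg b)$, $(a\rightarrow p)\rightarrow p=a\sqcup p$, and $b\rightarrow c=\neg c\rightarrow\neg b$.
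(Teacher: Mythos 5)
Your proof is correct and is essentially the paper's own argument: both reduce the claim to the inequality $\neg c\leq a\rightarrow\neg b$ inside the section $[\neg b,1]$ (the paper writes this element as $(a\sqcup\neg b)^{\neg b}$, which equals your $a\rightarrow p$ by the identity $(x\sqcup y)\rightarrow y\approx x\rightarrow y$), then apply the antitone involution $^{\neg b}$ and finish with $a\leq a\sqcup\neg b$. The only difference is the cosmetic choice of which identity expresses $a\otimes b$.
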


\begin{proof}
Since $c\leq b$ and $\neg$ is antitone, we have $\neg b\le\neg c$. If $a\otimes b\leq c$ then $\neg(a\sqcup\neg b)^{\neg b}\leq c$ which implies $\neg c\leq(a\sqcup\neg b)^{\neg b}$ and hence
\[
a\leq a\sqcup\neg b\leq(\neg c)^{\neg b}=\neg c\rightarrow\neg b=b\rightarrow c.
\]
\end{proof}

Under some other assumption, the converse implication hold.

\begin{Lemma}\label{lem3}
Let $(A,\oplus,\neg,0)$ be an {\rm NMV}-algebra with {\rm SAI}, let $a,b,c\in A$ and assume $\neg a\leq b$. Then $a\leq b\rightarrow c$ implies $a\otimes b\leq c$.
\end{Lemma}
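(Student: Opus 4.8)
The plan is to run the proof of Lemma~\ref{lem2} in reverse, working inside the section $[\neg b,1]$ and using the antitonicity of the section involution there (the SAI hypothesis) to flip an inequality. The role played by the assumption $c\leq b$ in Lemma~\ref{lem2} will here be played by $\neg a\leq b$.

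First I would rewrite the conclusion in a more convenient form. Directly from the definitions, $a\otimes b=\neg(\neg a\oplus\neg b)=\neg(a\rightarrow\neg b)$ (equivalently, using $x\otimes y\approx\neg(x\sqcup\neg y)^{\neg y}$ together with $(x\sqcup y)\rightarrow y\approx x\rightarrow y$). Hence, by the equivalence $x\leq y\Leftrightarrow\neg y\leq\neg x$, the desired conclusion $a\otimes b\leq c$ is equivalent to $\neg c\leq a\rightarrow\neg b$. So the whole task reduces to establishing $\neg c\leq a\rightarrow\neg b$.

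Next I would exploit the two hypotheses. From $\neg a\leq b$ and $x\leq y\Leftrightarrow\neg y\leq\neg x$ we get $\neg b\leq a$, so that $a\in[\neg b,1]$ and $a\rightarrow\neg b=a^{\neg b}$ is genuinely the section involution of $a$ on $[\neg b,1]$. For the second hypothesis, contraposition $b\rightarrow c\approx\neg c\rightarrow\neg b$ turns $a\leq b\rightarrow c$ into $a\leq\neg c\rightarrow\neg b$. Setting $d:=\neg c\sqcup\neg b$, we have $\neg b\leq d$, so $d\in[\neg b,1]$, and by $(x\sqcup y)\rightarrow y\approx x\rightarrow y$ also $\neg c\rightarrow\neg b=d\rightarrow\neg b=d^{\neg b}$. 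Thus $a\leq d^{\neg b}$ with both $a$ and $d^{\neg b}$ lying in $[\neg b,1]$.

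The key step is then to apply the section antitone involution $\cdot^{\neg b}$ to the inequality $a\leq d^{\neg b}$: antitonicity gives $(d^{\neg b})^{\neg b}\leq a^{\neg b}$, and the involution law $(x^{\neg b})^{\neg b}=x$ yields $d\leq a^{\neg b}=a\rightarrow\neg b$. Since $\neg c\leq\neg c\sqcup\neg b=d$, we obtain $\neg c\leq a\rightarrow\neg b$, which is exactly the reduced goal. The main obstacle, and the point where the hypothesis $\neg a\leq b$ is essential, is guaranteeing that every element to which the involution is applied really sits in the section $[\neg b,1]$; only then is the SAI (antitonicity) assumption legitimately available, and it is precisely this placement of $a$ in the section that corresponds to the condition $c\leq b$ used in the companion Lemma~\ref{lem2}.
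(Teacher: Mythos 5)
Your proof is correct and is essentially the paper's own argument: both convert $a\leq b\rightarrow c$ via contraposition into $a\leq(\neg c\sqcup\neg b)^{\neg b}$, apply the antitone section involution on $[\neg b,1]$ (using $\neg b\leq a$ to place $a$ in that section) to get $\neg c\leq a^{\neg b}$, and then negate to obtain $a\otimes b\leq c$. Your version merely spells out the section-membership checks that the paper leaves implicit.
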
 

\begin{proof}
If $a\leq b\rightarrow c$ then $a\leq\neg c\rightarrow\neg b=(\neg c\sqcup\neg b)^{\neg b}$. Since $\neg a\le b$ this implies  $\neg c\leq\neg c\sqcup\neg b\leq a^{\neg b}$ and hence $a\otimes b=\neg(a\sqcup\neg b)^{\neg b}=\neg a^{\neg b}\leq c$.
\end{proof}

Combining Lemma~\ref{lem2} and \ref{lem3} we obtain

\begin{Corollary}
If $(A,\oplus,\neg,0)$ be an {\rm NMV}-algebra with {\rm SAI} then for all $x,y,z\in A$,
\begin{enumerate}
\item[{\rm(i)}] if $\neg x,z\leq y$ then
\[
x\otimes y\leq z\text{ is equivalent to }x\leq y\rightarrow z,
\]
\item[{\rm(ii)}] $(x\sqcup\neg(y\sqcup z))\otimes(y\sqcup z)\leq z$ if and only if $x\sqcup\neg(y\sqcup z)\leq(y\sqcup z)\rightarrow z=y\rightarrow z$.
\end{enumerate}
\end{Corollary}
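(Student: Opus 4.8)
The plan is to derive both statements directly from the two preceding lemmas, with part (ii) being exactly the special instance of part (i) that renders adjointness unconditional.

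For part (i), I would observe that the two hypotheses $\neg x\leq y$ and $z\leq y$ are precisely the side conditions of Lemma~\ref{lem3} and Lemma~\ref{lem2}, respectively. Reading Lemma~\ref{lem2} with $a:=x$, $b:=y$, $c:=z$ (legitimate because $z\leq y$) yields the implication ``$x\otimes y\leq z$ implies $x\leq y\rightarrow z$''; reading Lemma~\ref{lem3} with the same substitution (legitimate because $\neg x\leq y$) yields the reverse implication. Together these give the claimed equivalence, so (i) needs no computation beyond matching hypotheses.

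For part (ii), I would apply part (i) to the elements $x':=x\sqcup\neg(y\sqcup z)$, $y':=y\sqcup z$ and $z':=z$. This requires verifying the two hypotheses of (i) for these elements, namely $z'\leq y'$ and $\neg x'\leq y'$. The first, $z\leq y\sqcup z$, is immediate from the fact $a,b\leq a\sqcup b$ recorded in the Lemma. The second is the only point demanding a short argument: from $\neg(y\sqcup z)\leq x\sqcup\neg(y\sqcup z)=x'$ (again $b\leq a\sqcup b$) and the antitonicity of $\neg$ we obtain $\neg x'\leq\neg\neg(y\sqcup z)$, and the double negation law rewrites the right-hand side as $y\sqcup z=y'$. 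With both hypotheses in place, part (i) gives that $x'\otimes y'\leq z'$ is equivalent to $x'\leq y'\rightarrow z'$, which is exactly the asserted equivalence. Finally, the identification $(y\sqcup z)\rightarrow z=y\rightarrow z$ is just the identity $(u\sqcup v)\rightarrow v\approx u\rightarrow v$ listed earlier, taken with $u:=y$ and $v:=z$.

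The calculations here are entirely routine; the only step that is not purely mechanical is the verification of $\neg x'\leq y'$, so I expect that to be the main obstacle, although it reduces to a single use of antitonicity of $\neg$ together with involutivity.
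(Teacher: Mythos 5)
Your proposal is correct and follows exactly the route the paper intends: the paper offers no separate proof beyond the phrase ``Combining Lemma~\ref{lem2} and \ref{lem3} we obtain,'' and your part (i) is precisely that combination, while your part (ii) correctly instantiates (i) at $x\sqcup\neg(y\sqcup z)$, $y\sqcup z$, $z$ and verifies the two side conditions via $a,b\leq a\sqcup b$, antitonicity of $\neg$, the double negation law, and the listed identity $(x\sqcup y)\rightarrow y\approx x\rightarrow y$.
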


In Example~\ref{ex1}, the equivalence of $x\otimes y\leq z$ and $x\leq y\rightarrow z$ holds if and only if $(x,y,z)\neq(c,c,d),(d,d,c)$. Observe that neither $d\leq c$ nor $c\leq d$.

Next we introduce a concept which is weaker than that of a residuated poset. Namely, adjointness will be replaced by the conditions occurring in Lemmata~\ref{lem2} and \ref{lem3}.

\begin{Definition}\label{def2}
A sixtuple $\mathbf P=(P,\leq,\otimes,\rightarrow,0,1)$ with two binary operations $\otimes$ and $\rightarrow$ will be called a {\em conditionally residuated poset} if
\begin{enumerate}
\item[{\rm(a)}] $(P,\leq,0,1)$ is a bounded poset and $x\leq y$ implies $x\rightarrow y=1$,
\item[{\rm(b)}] $(P,\otimes,1)$ is a commutative groupoid with neutral element $1$,
\item[{\rm(c)}] if $x\otimes y\leq z$ and $z\leq y$ then $x\leq y\rightarrow z$, and if $x\leq y\rightarrow z$ and $\neg x\leq y$ then $x\otimes y\leq z$.
\end{enumerate}
Here and in the following $\neg x:=x\rightarrow0$ for all $x\in P$. Condition {\rm(c)} will be called {\em conditional adjointness}. We say that $\mathbf P$ satisfies
\begin{itemize}
\item {\em weak divisibility} if $x\otimes(x\rightarrow y)\leq y$,
\item the {\em contraposition law} if $x\rightarrow y\approx\neg y\rightarrow\neg x$,
\item the {\em double negation law} if $\neg(\neg x)\approx x$,
\item the {\em \L ukasiewicz axiom} if $(x\rightarrow y)\rightarrow y\approx(y\rightarrow x)\rightarrow x$,
\item the {\em compatibility conditions} if $y\leq x\rightarrow y$ and $(((x\rightarrow y)\rightarrow y)\rightarrow y\approx x\rightarrow y$.
\end{itemize}
\end{Definition}

In order to justify the introduced concepts, we state the following

\begin{Theorem}
Let $\mathbf A=(A,\oplus,\neg,0)$ be an {\rm NMV}-algebra with {\rm SAI} and the term operations $\otimes$ and $\rightarrow$ defined above and the induced order $\leq$. Then $(A,\leq,\otimes,\rightarrow,0,1)$ is a conditionally residuated poset satisfying weak divisibility, the contraposition law, the double negation law, the \L ukasiewicz axiom and the compatibility conditions.
\end{Theorem}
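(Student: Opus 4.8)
The plan is to verify each asserted property separately, drawing on the identities already collected for NMV-algebras and on Lemmata~\ref{lem2} and \ref{lem3}. Much of the work has in fact been done already, so the theorem is largely a matter of assembling the pieces in the right order.

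First I would check that $(A,\leq,0,1)$ is a bounded poset. The preceding Lemma already gives that $(A,\leq,1)$ is a poset with greatest element $1$. Since $\neg$ is an antitone involution and $1$ is the top element, its image $0=\neg1$ must be the least element, so the poset is bounded. The implication $x\leq y\Rightarrow x\rightarrow y=1$ is immediate from the definition of the induced order, which completes condition~(a). For condition~(b), commutativity of $\otimes$ follows from $x\otimes y=\neg(\neg x\oplus\neg y)$ together with commutativity of $\oplus$, while $x\otimes1=\neg(\neg x\oplus\neg1)=\neg(\neg x\oplus0)=\neg(\neg x)=x$ shows that $1$ is a neutral element. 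Condition~(c), conditional adjointness, is then exactly the content of Lemmata~\ref{lem2} and \ref{lem3}: the first implication (under the side condition $z\leq y$) is Lemma~\ref{lem2}, and the second (under the side condition $\neg x\leq y$) is Lemma~\ref{lem3}. Hence $(A,\leq,\otimes,\rightarrow,0,1)$ is a conditionally residuated poset.

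It then remains to verify the five named laws, each of which reduces to an identity already listed for NMV-algebras. Weak divisibility follows from the identity $x\otimes(x\rightarrow y)\approx x\sqcap y$ together with the inequality $x\sqcap y\leq y$ supplied by the Lemma. The contraposition law $x\rightarrow y\approx\neg y\rightarrow\neg x$, the double negation law $\neg(\neg x)\approx x$ (a defining axiom), and the \L ukasiewicz axiom $(x\rightarrow y)\rightarrow y\approx(y\rightarrow x)\rightarrow x$ are all among the stated identities. For the compatibility conditions, the identity $x\rightarrow(y\rightarrow x)\approx1$ gives $x\leq y\rightarrow x$, and interchanging the roles of $x$ and $y$ yields $y\leq x\rightarrow y$; the remaining equation $((x\rightarrow y)\rightarrow y)\rightarrow y\approx x\rightarrow y$ is once more a listed identity.

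Since none of these steps requires a fresh argument, I do not expect a genuine obstacle here: the real substance of the result sits entirely in Lemmata~\ref{lem2} and \ref{lem3}, where the conditional form of adjointness was established under the two complementary restrictions $z\leq y$ and $\neg x\leq y$. The only point demanding attention is matching the variable names in those lemmata to the formulation of conditional adjointness in Definition~\ref{def2}, and confirming that the SAI hypothesis is what makes the two halves of~(c) available in the first place.
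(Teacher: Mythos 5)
Your proposal is correct and follows exactly the route the paper takes: its proof is a one-line citation of the identities listed after Definition~\ref{def1} together with Lemmata~\ref{lem2} and~\ref{lem3}, and you have simply spelled out which identity or lemma yields each clause of Definition~\ref{def2} and each of the five named laws. No discrepancy to report.
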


\begin{proof}
The proof follows by the identities mentioned in and after Definition~\ref{def1} and Lemmata~\ref{lem2} and \ref{lem3}.
\end{proof}

Our next goal is to show that also the converse assertion holds. For this, we have to prove some preliminary results.

\begin{Lemma}\label{lem1}
Every conditionally residuated poset $\mathbf P=(P,\leq,\otimes,\rightarrow,0,1)$ satisfies the following conditions:
\begin{enumerate}
\item[{\rm(i)}] $1\rightarrow x\approx x$, $x\rightarrow x\approx1$ and $\neg0\approx1$,
\item[{\rm(ii)}] If $\mathbf P$ satisfies the double negation law then $\neg x\otimes x\approx0$ and $\neg1\approx0$,
\item[{\rm(iii)}] If $\mathbf P$ satisfies weak divisibility and the compatibility conditions then $x\leq(x\rightarrow y)\rightarrow y$; moreover, $x\leq y$ if and only if $x\rightarrow y=1$ in this case.
\end{enumerate}
\end{Lemma}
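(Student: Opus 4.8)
The plan is to handle the three parts separately, each time reducing the claim to a single application of one direction of the conditional adjointness condition~(c), with the instantiation chosen so that the side conditions become trivial. For part~(i), the identities $x\rightarrow x\approx1$ and $\neg0\approx1$ are immediate: reflexivity gives $x\leq x$, so~(a) yields $x\rightarrow x=1$, whence $\neg0=0\rightarrow0=1$ by the definition of $\neg$. The substantive claim is $1\rightarrow x\approx x$, which I would prove by establishing both inequalities. For $x\leq1\rightarrow x$ I would apply the first half of~(c) with $y:=1$ and $z:=x$: since $1$ is the neutral element, $x\otimes1=x\leq x$, and $x\leq1$ because $1$ is the top, so the hypotheses $x\otimes y\leq z$ and $z\leq y$ hold and we obtain $x\leq1\rightarrow x$. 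For the reverse inequality I would apply the second half of~(c) to the element $1\rightarrow x$, again with $y:=1$ and $z:=x$: the hypothesis $(1\rightarrow x)\leq1\rightarrow x$ is reflexivity and $\neg(1\rightarrow x)\leq1$ holds trivially, so $(1\rightarrow x)\otimes1\leq x$, i.e.\ $1\rightarrow x\leq x$. Antisymmetry then gives $1\rightarrow x=x$.

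For part~(ii), assuming the double negation law, I would first note $\neg1=\neg(\neg0)=0$ using $\neg0=1$ from part~(i). For $\neg x\otimes x\approx0$ I would invoke commutativity of $\otimes$ and apply the second half of~(c) to $x$ with $y:=\neg x$ and $z:=0$. The key observation is that $\neg x\rightarrow0=\neg(\neg x)=x$, so the hypothesis $x\leq\neg x\rightarrow0$ is once more reflexivity, while $\neg x\leq\neg x$ is trivial; hence $x\otimes\neg x\leq0$, and since $0$ is the bottom element this forces $\neg x\otimes x=0$.

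For part~(iii), the inequality $x\leq(x\rightarrow y)\rightarrow y$ falls out of the first half of~(c) applied with $y:=x\rightarrow y$ and $z:=y$: the premise $x\otimes(x\rightarrow y)\leq y$ is precisely weak divisibility, and the side condition $y\leq x\rightarrow y$ is precisely the first compatibility condition, so both hypotheses are available and the conclusion is the desired inequality. For the ``moreover'' part, the forward implication is condition~(a) itself, and the converse follows by combining the inequality just proved with part~(i): if $x\rightarrow y=1$ then $x\leq(x\rightarrow y)\rightarrow y=1\rightarrow y=y$.

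The only place requiring any ingenuity is the identity $1\rightarrow x\approx x$ in part~(i): unlike the other items it needs \emph{both} directions of conditional adjointness, and the idea is to recognise that choosing $y:=1$ collapses the otherwise restrictive side conditions $z\leq y$ and $\neg x\leq y$ into trivialities, since $1$ is simultaneously the top element and the neutral element of $\otimes$. Once this instantiation pattern is spotted, the remaining steps are direct substitutions; in particular, in part~(iii) the hypotheses of weak divisibility and of the first compatibility condition have evidently been arranged to match exactly the two premises of~(c).
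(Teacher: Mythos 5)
Your proposal is correct and follows essentially the same route as the paper: both directions of conditional adjointness instantiated with $y:=1$ for $1\rightarrow x\approx x$, the double negation law feeding the second half of (c) for $\neg x\otimes x\approx0$, and weak divisibility plus the first compatibility condition supplying exactly the two premises of the first half of (c) in (iii). Your minor shortcuts (deriving $x\rightarrow x\approx1$ from (a) rather than from (c), and $\neg1\approx0$ directly from $\neg(\neg0)\approx0$) are valid and do not change the argument in any essential way.
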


\begin{proof}
\
\begin{enumerate}
\item[(i)] Since $1\rightarrow x\leq1\rightarrow x$ and $\neg(1\rightarrow x)\leq1$ we have $1\rightarrow x=(1\rightarrow x)\otimes1\leq x$. On the other hand, from $x\otimes1\leq x$ and $x\leq1$ we obtain $x\leq1\rightarrow x$. Together we have $1\rightarrow x\approx x$. Moreover, since $1\otimes x\leq x$ and $x\leq x$ we have $1\leq x\rightarrow x$, i.e.\ $x\rightarrow x\approx1$. Because of $x\rightarrow x\approx1$ we have $\neg0\approx0\rightarrow0\approx1$.
\item[(ii)] Since $\neg x\leq x\rightarrow0$ and $\mathbf P$ satisfies the double negation law we obtain $\neg x\otimes x\leq0$, i.e.\ $\neg x\otimes x\approx0$ and $\neg1\approx\neg1\otimes1\approx0$.
\item[(iii)] Because of weak divisibility we have $x\otimes(x\rightarrow y)\leq y$. Now $y\leq x\rightarrow y$ according to the compatibility conditions. Hence $x\leq(x\rightarrow y)\rightarrow y$ follows because of (c) of Definition~\ref{def2}. Moreover, according to (a) of Definition~\ref{def2}, $x\leq y$ implies $x\rightarrow y=1$. Conversely, because of $x\leq(x\rightarrow y)\rightarrow y$ and (i), $x\rightarrow y=1$ implies $x\leq(x\rightarrow y)\rightarrow y=1\rightarrow y=y$.
\end{enumerate}
\end{proof}

Let us note that all the conditions occurring in Lemma~\ref{lem1} are satisfied in every NMV-algebra as mentioned above. By using the previous results, we are now able to show when a conditionally residuated poset can be converted into an NMV-algebra.

\begin{Theorem}
If $\mathbf P=(P,\leq,\otimes,\rightarrow,0,1)$ is a conditionally residuated poset satisfying weak divisibility, the contraposition law, the double negation law, the \L ukasiewicz axiom and the compatibility conditions and if we put $x\oplus y:=\neg x\rightarrow y$ for all $x,y\in P$ then $(P,\oplus,\neg,0)$ is an {\rm NMV}-algebra whose induced order coincides with $\leq$.
\end{Theorem}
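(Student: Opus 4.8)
The plan is to verify the seven defining identities of an NMV-algebra from Definition~\ref{def1} for $(P,\oplus,\neg,0)$ and then to check that the induced order agrees with $\leq$. The whole argument rests on a single translation: since $\mathbf P$ satisfies the double negation law, $\neg(\neg a)=a$, so from $x\oplus y:=\neg x\rightarrow y$ we obtain $\neg a\oplus b=\neg(\neg a)\rightarrow b=a\rightarrow b$ for all $a,b$. Thus every $\oplus$-term rewrites into a term built solely from $\rightarrow$ and $\neg$, and each NMV-identity becomes a statement about $\rightarrow$ that I can attack with the hypotheses on $\mathbf P$ and with Lemma~\ref{lem1}.

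First the routine identities. Commutativity $x\oplus y\approx y\oplus x$ follows from the contraposition law together with double negation, since $\neg x\rightarrow y=\neg y\rightarrow\neg(\neg x)=\neg y\rightarrow x$. The identity $x\oplus0\approx x$ is just $\neg x\rightarrow0=\neg(\neg x)=x$, using the definition $\neg a=a\rightarrow0$ and double negation; the identity $\neg(\neg x)\approx x$ is the double negation law itself; and $x\oplus1\approx1$ follows from $\neg x\leq1$ and condition (a) of Definition~\ref{def2}, which gives $\neg x\rightarrow1=1$.

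For the \L ukasiewicz axiom the translation yields $\neg(\neg x\oplus y)\oplus y=(x\rightarrow y)\rightarrow y$ (first $\neg x\oplus y=x\rightarrow y$, then $\neg(x\rightarrow y)\oplus y=(x\rightarrow y)\rightarrow y$), and symmetrically the right-hand side equals $(y\rightarrow x)\rightarrow x$; so this identity is exactly the \L ukasiewicz axiom assumed for $\mathbf P$. For the seventh identity I compute $\neg x\oplus(x\oplus y)=x\rightarrow(\neg x\rightarrow y)$, rewrite the inner term by contraposition as $\neg x\rightarrow y=\neg y\rightarrow x$, and then invoke the compatibility condition $b\leq a\rightarrow b$ (with $a=\neg y$, $b=x$) to get $x\leq\neg y\rightarrow x$; Lemma~\ref{lem1}(iii) converts this into $x\rightarrow(\neg y\rightarrow x)=1$, as required.

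The main work, and the only place needing care, is the sixth identity. Peeling off the nested negations with the rule $\neg a\oplus b=a\rightarrow b$ reduces its left-hand side to $x\rightarrow E$, where $E=(((x\rightarrow y)\rightarrow y)\rightarrow z)\rightarrow z$, so it suffices to prove $x\leq E$. Here Lemma~\ref{lem1}(iii) is the engine: applied once it gives $x\leq(x\rightarrow y)\rightarrow y=:V$, and applied again with $V$ in place of $x$ it gives $V\leq(V\rightarrow z)\rightarrow z=E$; transitivity of $\leq$ then yields $x\leq E$ and hence $x\rightarrow E=1$. I expect the delicate part to be purely bookkeeping: parsing the fourfold-nested expression correctly and observing that every outward step $\neg(\cdots)\oplus w$ collapses to $(\cdots)\rightarrow w$. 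Finally, the induced order coincides with $\leq$ because the induced implication $\neg x\oplus y$ equals $x\rightarrow y$, so $x\rightarrow y=1$ is equivalent to $x\leq y$, again by Lemma~\ref{lem1}(iii).
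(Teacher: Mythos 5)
Your proof is correct, but it takes a genuinely different route from the paper's. You verify the seven axioms of Definition~\ref{def1} one by one, after observing that the double negation law collapses every term $\neg a\oplus b$ to $a\rightarrow b$; the bookkeeping in the sixth identity (reducing it to $x\leq(((x\rightarrow y)\rightarrow y)\rightarrow z)\rightarrow z$ via two applications of Lemma~\ref{lem1}(iii) and transitivity) and in the seventh (reducing it via contraposition to the compatibility condition $y\leq x\rightarrow y$) checks out, and the identification of the induced order is handled by the same lemma. The paper instead argues structurally: it defines $x\sqcup y:=(x\rightarrow y)\rightarrow y$ and $x^a:=x\rightarrow a$, shows that $(P,\sqcup,1)$ is a directoid with greatest element and that each $^a$ is a switching involution on $[a,1]$, verifies the exchange condition $\neg x\rightarrow y\approx\neg y\rightarrow x$ (which is your commutativity computation), and then invokes the representation theorem for NMV-algebras (Theorem~8 of \cite{CK}) to conclude. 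Your version is more self-contained --- it needs nothing beyond Lemma~\ref{lem1} and the assumed laws, whereas the paper outsources the hardest identities (notably the sixth) to the cited characterization; the price is that you must parse and discharge each axiom by hand, which the paper's structural detour avoids. Both arguments are sound.
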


\begin{proof}
Let $a,b\in P$. Put $x\sqcup y:=(x\rightarrow y)\rightarrow y$ for all $x,y\in P$. Applying (iii) of Lemma~\ref{lem1} and the \L ukasiewicz axiom we have $a\sqcup b=b\sqcup a\in U(a,b)$. Moreover, if $a\leq b$ then $a\sqcup b=(a\rightarrow b)\rightarrow b=1\rightarrow b=b$ according to (i) of Lemma~\ref{lem1}. Since also $1\sqcup a=(1\rightarrow a)\rightarrow a=a\rightarrow a=1$ according to (i) of Lemma~\ref{lem1}, $(P,\sqcup,1)$ is a directoid with greatest element $1$. Now put $x^a:=x\rightarrow a$ for all $x\in[a,1]$. According to the compatibility conditions, $^a$ is a mapping from $[a,1]$ to itself. Because of (i) of Lemma~\ref{lem1}, $a^a=a\rightarrow a=1$ and $1^a=1\rightarrow a=a$, i.e.\ $^a$ is switching. Moreover,
\[
(x^a)^a=((x\sqcup a)^a)^a=(((x\rightarrow a)\rightarrow a)\rightarrow a)\rightarrow a=(x\rightarrow a)\rightarrow a=x\sqcup a=x
\]
for all $x\in[a,1]$ according to the compatibility conditions, i.e.\ $^a$ is an involution on $[a,1]$. Finally, $\neg x\rightarrow y\approx\neg y\rightarrow\neg(\neg x)\approx\neg y\rightarrow x$ (this condition is denoted by (WE) in Theorem~8 of \cite{CK}) because of the contraposition law and the double negation law. This shows that $(P,\oplus,\neg,0)$ is an NMV-algebra according to Theorem~8 in \cite{CK}. Because of (v) of Lemma~\ref{lem1}, the induced order of $(P,\oplus,\neg,0)$ coincides with $\leq$.
\end{proof}

The question arises how to modify the concept of an NMV-algebra in order to obtain a residuated poset. The main difficulty which prevents to reach adjointness for NMV-algebras is, as mentioned above, the lack of associativity of the operation $\oplus$ and hence also of $\otimes$. However, the situation changes if we suppose the operation $\otimes$ to be monotonous, i.e.\ if
\[
x\leq y\text{ implies }x\otimes z\leq y\otimes z
\]
for all $z$. In what follows, we show that in this case, a poset with $\otimes$ can be organized into a residuated poset provided some reasonable conditions are satisfied.

\begin{Theorem}
Let $\mathbf P=(P,\leq,\otimes,\neg,0,1)$ be a bounded poset with an antitone involution and a commutative binary operation $\otimes$ which is monotonous and satisfies the identity $x\otimes1\approx x$. Define $x\rightarrow y:=\neg(x\otimes\neg y)$ for all $x,y\in P$. If $\mathbf P$ satisfies the condition
\begin{equation}\label{equ1}
x\leq\neg(\neg(x\otimes y)\otimes y)
\end{equation}
then $\mathbf R:=(P,\leq,\otimes,\rightarrow,0,1)$ is an integral residuated poset satisfying the double negation law.
\end{Theorem}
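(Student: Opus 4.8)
The plan is to notice that almost every defining clause of an integral residuated poset is handed to us directly, so that the whole proof reduces to establishing adjointness. Indeed, $(P,\otimes,1)$ is a commutative groupoid with neutral element $1$ since $\otimes$ is commutative and $x\otimes1\approx x$ forces $1\otimes x\approx x$; integrality is automatic because $1$ is by hypothesis the greatest element of the bounded poset; and the double negation law is precisely the assumption that $\neg$ is an involution. Hence I would concentrate entirely on the equivalence $x\otimes y\leq z\iff x\leq y\rightarrow z$, rewriting $y\rightarrow z=\neg(y\otimes\neg z)=\neg(\neg z\otimes y)$ by commutativity for convenience.

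For the forward direction, assume $x\otimes y\leq z$. Applying the antitone involution gives $\neg z\leq\neg(x\otimes y)$, and monotonicity of $\otimes$ in the first argument then yields $\neg z\otimes y\leq\neg(x\otimes y)\otimes y$; applying $\neg$ once more reverses this to $\neg(\neg(x\otimes y)\otimes y)\leq\neg(\neg z\otimes y)=y\rightarrow z$. Chaining this with the hypothesis \eqref{equ1}, namely $x\leq\neg(\neg(x\otimes y)\otimes y)$, delivers $x\leq y\rightarrow z$. So this direction is an immediate consequence of \eqref{equ1} together with antitonicity and monotonicity.

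The converse direction is where the real work lies, and the key is to extract from \eqref{equ1} a divisibility-type inequality. Substituting $\neg z$ for $x$ in \eqref{equ1} gives $\neg z\leq\neg(\neg(\neg z\otimes y)\otimes y)$; applying $\neg$ and cancelling the double negations turns this into $\neg(\neg z\otimes y)\otimes y\leq z$, that is, $(y\rightarrow z)\otimes y\leq z$. Once this is in hand, the assumption $x\leq y\rightarrow z$ combined with monotonicity of $\otimes$ gives $x\otimes y\leq(y\rightarrow z)\otimes y\leq z$, as required.

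I expect the main obstacle to be spotting the correct instance of \eqref{equ1} in the converse direction: the substitution $x\mapsto\neg z$ is exactly engineered so that, after applying the involution and the double negation law, condition \eqref{equ1} collapses to $(y\rightarrow z)\otimes y\leq z$. It is worth emphasizing that monotonicity of $\otimes$ is indispensable in both directions, since it is what allows inequalities to be transported through $\otimes$; this is precisely the property, absent in a general NMV-algebra, that upgrades conditional adjointness to genuine adjointness.
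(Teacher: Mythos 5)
Your proof is correct and follows essentially the same route as the paper: the forward direction chains \eqref{equ1} with monotonicity and antitonicity exactly as the paper does, and your converse, which isolates $(y\rightarrow z)\otimes y\leq z$ from the instance $x\mapsto\neg z$ of \eqref{equ1}, is just a cleaner restatement of the paper's one-line computation. The only detail you gloss over is that the double negation law is not literally ``the assumption that $\neg$ is an involution'': one must identify the derived negation with the given one via $x\rightarrow0=\neg(x\otimes\neg0)=\neg(x\otimes1)=\neg x$, using $\neg0=1$, which is the short verification the paper includes.
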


\begin{proof}
Let $a,b,c\in P$. If $a\otimes b\leq c$ then
\[
a\leq\neg(\neg(a\otimes b)\otimes b)\leq\neg(\neg c\otimes b)=b\rightarrow c
\]
due to (\ref{equ1}), monotonicity of $\otimes$ and antitony of $\neg$. Conversely, if $a\leq b\rightarrow c$ then
\[
a\otimes b\leq(b\rightarrow c)\otimes b=\neg(b\otimes\neg c)\otimes b=\neg(\neg(\neg(\neg c\otimes b)\otimes b))\leq\neg(\neg c)=c
\]
because of (\ref{equ1}) and since $\neg$ is an antitone involution. Hence $\mathbf R$ is a residuated poset. Since $\neg$ is an antitone involution on $(P,\leq)$ we have $\neg0=1$. Now $a\rightarrow0=\neg(a\otimes\neg0)=\neg(a\otimes1)=\neg a$ and therefore $\mathbf R$ satisfies the double negation law. Since $1$ is the greatest element of $(P,\leq)$, $\mathbf R$ is integral.
\end{proof}

We can prove also the converse.

\begin{Theorem}
Let $\mathbf R=(R,\leq,\otimes,\rightarrow,0,1)$ be an integral residuated poset satisfying the identity $x\rightarrow y\approx\neg(x\otimes\neg y)$ (here and in the following $\neg x:=x\rightarrow 0$) and the double negation law and assume $\neg$ to be antitone. Then $(R,\leq,\otimes,\neg,0,1)$ is a bounded poset with an antitone involution whose operation $\otimes$ is commutative, monotonous and satisfies the identities $x\otimes1\approx x$ and $x\otimes0\approx0$ as well as condition {\rm(\ref{equ1})} .
\end{Theorem}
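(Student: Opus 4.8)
The plan is to verify the listed properties one at a time, noting that most are inherited directly from the hypotheses and that only condition~(\ref{equ1}) requires a genuine idea. That $(R,\leq,0,1)$ is a bounded poset and that $\otimes$ is commutative with $x\otimes1\approx x$ are already part of the definition of an integral residuated poset. Since $\neg$ is assumed antitone and the double negation law supplies $\neg(\neg x)\approx x$, the map $\neg$ is an antitone involution, so $(R,\leq,\otimes,\neg,0,1)$ is a bounded poset with antitone involution. For $x\otimes0\approx0$ I would use adjointness: an integral residuated poset is in particular a conditionally residuated poset, so by Lemma~\ref{lem1}(i) we have $0\rightarrow0=\neg0=1$; then $x\leq1=0\rightarrow0$ gives $x\otimes0\leq0$ by adjointness, and since $0$ is the least element this forces $x\otimes0\approx0$.

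For monotonicity I would run the standard residuation argument. Given $x\leq y$, apply adjointness to the trivial inequality $y\otimes z\leq y\otimes z$ to obtain $y\leq z\rightarrow(y\otimes z)$; then $x\leq y\leq z\rightarrow(y\otimes z)$, and a second application of adjointness yields $x\otimes z\leq y\otimes z$, as required.

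The main obstacle is condition~(\ref{equ1}), namely $x\leq\neg(\neg(x\otimes y)\otimes y)$, and I expect the decisive step to be rewriting it by means of the antitone involution. Because $\neg$ is an antitone involution, $x\leq\neg w$ is equivalent to $w\leq\neg x$; taking $w=\neg(x\otimes y)\otimes y$ reduces the claim to $\neg(x\otimes y)\otimes y\leq\neg x$. By adjointness, residuating the factor $y$, this is in turn equivalent to $\neg(x\otimes y)\leq y\rightarrow\neg x$. Now the defining identity $p\rightarrow q\approx\neg(p\otimes\neg q)$ together with the double negation law gives $y\rightarrow\neg x=\neg(y\otimes\neg\neg x)=\neg(y\otimes x)=\neg(x\otimes y)$ (using commutativity of $\otimes$), so the inequality collapses to the trivial $\neg(x\otimes y)\leq\neg(x\otimes y)$. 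Hence~(\ref{equ1}) holds, which completes the verification and establishes the theorem.
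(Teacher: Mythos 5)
Your proposal is correct, and for most of the statement it follows the same path as the paper: commutativity and $x\otimes1\approx x$ read off from the definition, the antitone involution from the double negation law plus the antitony hypothesis, $x\otimes0\approx0$ by one application of adjointness, and condition~(\ref{equ1}) obtained by combining the identity $y\rightarrow\neg x=\neg(x\otimes y)$ with adjointness and the antitone involution (you run this as a chain of equivalences ending in a triviality, the paper runs it forward from $\neg(a\otimes b)\leq b\rightarrow\neg a$; the content is identical). The one genuine divergence is monotonicity: you use the standard two-step residuation argument --- from $y\otimes z\leq y\otimes z$ get $y\leq z\rightarrow(y\otimes z)$, then $x\leq y$ and a second application of adjointness give $x\otimes z\leq y\otimes z$ --- which is valid in any residuated poset and uses neither the defining identity nor the antitony of $\neg$. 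The paper instead writes $c\otimes a=\neg(c\rightarrow\neg a)$ via the identity and deduces monotonicity of $\otimes$ from antitony of $\neg$ and monotonicity of $c\rightarrow(\cdot)$. Your version is more elementary and more general; the paper's version stays closer to the specific axioms of the theorem. Two minor remarks: your detour through Lemma~\ref{lem1}(i) to get $0\rightarrow0=1$ for the proof of $x\otimes0\approx0$ is sound (an integral residuated poset does satisfy the conditional-adjointness axioms), but the paper's direct observation that $0\leq a\rightarrow0$ because $0$ is the least element is shorter; and note that your argument, unlike the paper's explicit mention, leaves the verification $\neg(\neg x)\approx x$ as an unstated instance of the hypothesis, which is harmless.
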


\begin{proof} Let $a,b,c\in R$. Of course, we have $a\approx(a\rightarrow0)\rightarrow0\approx\neg(\neg a)$. Since $0\leq a\rightarrow 0$, we infer $a\otimes0=0\otimes a\leq0$ whence $a\otimes0=0$. By definition, $\otimes$ is commutative. If $a\leq b$ then because of $c\rightarrow\neg b\leq c\rightarrow\neg b$ we have $(c\rightarrow\neg b)\otimes c\leq\neg b\leq\neg a$ and hence $c\rightarrow\neg b\leq c\rightarrow\neg a$ wherefrom we conclude $c\otimes a=\neg(c\rightarrow\neg a)\leq\neg(c\rightarrow\neg b)=c\otimes b$, i.e.\ $\otimes$ is monotonous. Finally, because of $\neg(a\otimes b)=b\rightarrow\neg a$ we have $\neg(a\otimes b)\otimes b\leq\neg a$ which implies $a\leq\neg(\neg(a\otimes b)\otimes b)$ proving (\ref{equ1}).
\end{proof}

Authors' addresses:

Ivan Chajda \\
Palack\'y University Olomouc \\
Faculty of Science \\
Department of Algebra and Geometry \\
17.\ listopadu 12 \\
771 46 Olomouc \\
Czech Republic \\
ivan.chajda@upol.cz

Helmut L\"anger \\
TU Wien \\
Faculty of Mathematics and Geoinformation \\
Institute of Discrete Mathematics and Geometry \\
Wiedner Hauptstra\ss e 8-10 \\
1040 Vienna \\
Austria \\
helmut.laenger@tuwien.ac.at
\end{document}